\begin{document}

\markboth{N.~Abrosimov \& B.~Vuong}
{The volume of a compact hyperbolic antiprism}


\title{The volume of a compact hyperbolic antiprism}

\author{Nikolay Abrosimov\footnote{Corresponding author.}}

\address{Regional Scientific and Educational Mathematical Center, \\
Tomsk State University, Tomsk, 634050, Russia\\[4pt]
Sobolev Institute of Mathematics, \\
Novosibirsk, 630090, Russia\\[4pt]
Novosibirsk State University,
Novosibirsk, 630090, Russia\\
abrosimov@math.nsc.ru}

\author{Bao Vuong}

\address{Sobolev Institute of Mathematics, \\
Novosibirsk, 630090, Russia\\[4pt]
Novosibirsk State University,
Novosibirsk, 630090, Russia\\
vuonghuubao@live.com}

\maketitle

\begin{abstract}
We consider a compact hyperbolic antiprism. It is a convex polyhedron with $2n$ vertices in the hyperbolic space $\mathbb{H}^3$. This polyhedron has a symmetry group $S_{2n}$ generated by a mirror-rotational symmetry of order $2n$, i.e. rotation to the angle $\pi/n$ followed by a reflection (see Fig.~1). 

We establish necessary and sufficient conditions for the existence of such polyhedra in $\mathbb{H}^3$. Then we find relations between their dihedral angles and edge lengths in the form of a cosine rule. Finally, we obtain exact integral formulas expressing the volume of a hyperbolic antiprism in terms of the edge lengths.
\end{abstract}

\keywords{Compact hyperbolic antiprism; hyperbolic volume; rotation followed by reflection; symmetry group $S_{2n}$.}

\ccode{Mathematics Subject Classification 2000: 52B15, 51M20, 51M25, 51M10}

\section{Introduction}
An {\em antiprism} $\mathcal{A}_n$ is a convex polyhedron with two equal regular $n$-gons as the top and the bottom and $2n$ equal triangles as the lateral faces. The antiprism can be regarded as a drum with triangular sides (see Fig.~1 where for $n=5$ the lateral boundary is shown).
\begin{figure}[th]
\centerline{\psfig{file=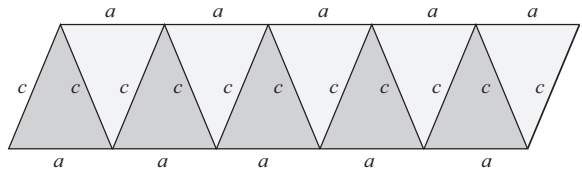,width=3in}}
\vspace*{8pt}
\caption{The lateral faces of antiprism $\mathcal{A}_5$.\label{fig1}}
\end{figure}

An antiprism $\mathcal{A}_n$ with $2n$ vertices has a symmetry group $S_{2n}$ generated by a mirror-rotational symmetry of order $2n$ denoted by $C_{2n\,h}$ (in Sh\"onflies notation). In Hermann--Mauguin notation this type of symmetry is denoted by $\overline{2n}$. The element $C_{2n\,h}$ is a composition of a rotation by the angle of $\pi/n$ about an axis passing through the centres of the top and the bottom faces and reflection with respect to a plane perpendicular to this axis and passing through the middles of the lateral edges (see Fig.~2).
\begin{figure}[th]
\centerline{\psfig{file=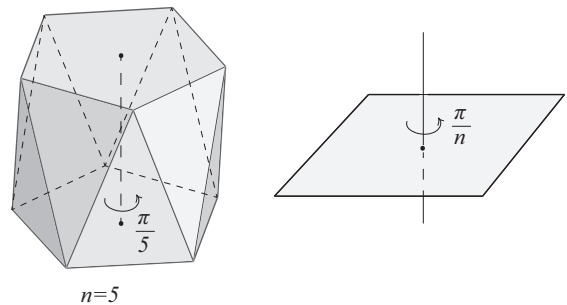,width=2.9in}}
\vspace*{8pt}
\caption{The symmetry of an antiprism.\label{fig2}}
\end{figure}

The above definitions of an antiprism $\mathcal{A}_n$ and its symmetry group $S_{2n}$ take place either for Euclidean or the hyperbolic space. By definition, $\mathcal{A}_n$ has two types of edges. Denote by $a$ the length of those edges that form top and bottom $n$-gonal faces. Set $c$ for the length of the lateral edges. Denote the dihedral angles by $A, C$ respectively. From here on, we will designate as $\mathcal{A}_n(a,c)$ for the antiprism $\mathcal{A}_n$ given by its edge lengths $a,c$.

The ideal antiprism in $\mathbb{H}^3$ with all vertices at infinity was studied by A.~Yu.~Vesnin and A.~D.~Mednykh \cite{VesMed1995} (see also \cite{Ves1996}). A particular case of ideal rectangular antiprism is due to W.~P.~Thurston \cite{Thu1980}. In the case of ideal antiprism the dihedral angles are related by a condition $2A+2C=2\pi$ while in a compact case the inequality $2A+2C>2\pi$ holds.

For $n=2$ the $n$-gons at the top and the bottom of antiprism $\mathcal{A}_n$ degenerate to corresponding two skew edges. Thus we obtain a tetrahedron with symmetry group $S_4$ (see Fig.~3). The volume a compact hyperbolic tetrahedron of this type was given by the authors in \cite{AbrVuo2017}.

\begin{figure}[th]
\centerline{\psfig{file=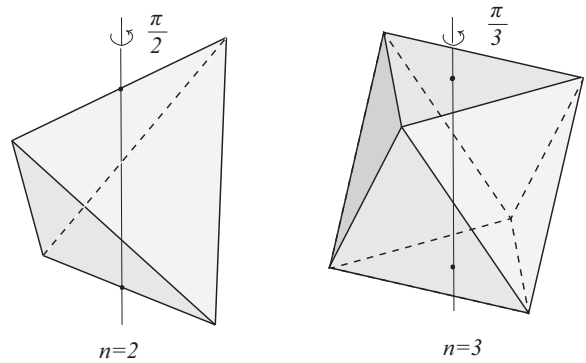,width=3in}}
\vspace*{8pt}
\caption{Particular antiprisms $\mathcal{A}_2$ and $\mathcal{A}_3$.\label{fig3}}
\end{figure}

For $n=3$ the antiprism $\mathcal{A}_n$ is an octahedron with symmetry group $S_6$ (see Fig.~3). The volume of a compact hyperbolic octahedron with this type of symmetry was found by the first author, E.~S.~Kudina and A.~D.~Mednykh in \cite{AbrKudMed2015}.

In the present work we consider a general case of a compact antiprism $\mathcal{A}_n$ in $\mathbb{H}^3$.

\section{Euclidean Antiprism}
Consider a rectangular coordinate system $Ox_1x_2x_3$ in the $3$-dimensional Eucli\-dean space $\mathbb{E}^3$ with the scalar product $\langle\cdot,\cdot\rangle_{\mathbb{E}}$. A mirror-rotational symmetry of order $2n$ is a rotation to the angle $\pi/n$ about the $Ox_3$ axis followed by a reflection with respect to the coordinate plane $Ox_1x_2$. It is defined by the matrix
$$
C_{2n\,h}=\left(
     \begin{array}{ccc}
       \cos \dfrac {\pi}{n} & -\sin \dfrac {\pi}{n} & ~~0\\[7pt]
       \sin \dfrac {\pi}{n} & ~~\cos \dfrac {\pi}{n} & ~~0\\[7pt]
       0 & 0 & -1\\
     \end{array}
   \right).
$$

Since the symmetry group acts transitively on the set of vertices, it suffices to point out the coordinates of one of the vertices. Without loss of generality, we can assume that a vertex $v_1$ has coordinates $(r,0,h/2)$ with some positive real numbers $r$ and $h$. The orbit of $v_1$ under the action of $C_{2n\,h}$ consists of all vertices of the antiprism $\mathcal{A}_n$.
$$
C_{2n\,h}\,:\,v_i \longmapsto v_{i+1},
$$
where the indices are taken modulo $2n$. 

Let us write the coordinates of the vertices of the antiprism $\mathcal{A}_n$ in $\mathbb{E}^3$
\noindent
\begin{equation}\begin{split}\label{EucCoor}
v_{2k+1}&=\left(r\,\cos\frac{2k\pi}{n}, r\,\sin\frac{2k\pi}{n}, \frac{h}{2}\right),\\
v_{2k+2}&=\left(r\,\cos\frac{(2k+1)\pi}{n}, r\,\sin\frac{(2k+1)\pi}{n}, -\frac{h}{2}\right),
\end{split}\end{equation}
where $k=0,\ldots,n-1$. Odd vertices form the top $n$-gonal face and even vertices form the bottom $n$-gonal face of $\mathcal{A}_n$.

The squared edge lengths of the antiprism are as follows
\noindent
\begin{equation*}
a^2=4\,r^2\sin^2\frac{\pi}{n},\quad c^2=h^2+4\,r^2\sin^2\frac{\pi}{2n}\,.
\end{equation*}

We can express the parameters $r,h$ from the latter equations
\noindent
\begin{equation}\label{rhExp}
r^2=\frac{a^2}{4\sin^2\frac{\pi}{n}},\quad h^2=c^2-\frac{a^2}{4\cos^2\frac{\pi}{2n}}\,.
\end{equation}

Using the coordinates of the vertices we calculate the cosines of the dihedral angles by the inner products of the outward normal vectors of corresponding faces. Then replace the parameters $r,h$ by the above expressions in terms of edge lengths. We obtain
\noindent
\begin{equation*}
\cos A=\frac{-a\tan\frac{\pi}{2n}}{\sqrt{4\,c^2-a^2}},\quad \cos C=\frac{a^2-4\,c^2\cos\frac{\pi}{n}}{4\,c^2-a^2}\,.
\end{equation*}

The condition $h^2>0$ or, equivalently, 
\begin{equation}\label{EucExist}
4\,c^2\cos\frac{\pi}{2n}-a^2>0
\end{equation} 
is necessary and sufficient for the existence of an antiprism $\mathcal{A}_n(a,c)$ in $\mathbb{E}^3$. The inequality $4\,c^2-a^2>0$ follows from (\ref{EucExist}). The dihedral angle $A$ is always greater than $\pi/2$.
If $4\,c^2\cos\frac{\pi}{2n}-a^2=0$ then the height $h$ vanishes and the antiprism degenerates to the planar regular $2n$-gone. 

Let us divide an Euclidean antiprism $\mathcal{A}_n$ into $n$ equal parts as shown in Fig.~4. Each of this parts consists of three tetrahedra $T_1, T_2, T_3$. Consider the tetrahedra $T_1=BDv_2v_4$ and $T_2=Bv_1v_2v_3$. They are not congruent but have the same volume
$$
vol(T_1)=vol(T_2)=\dfrac{a^2h}{12}\cot\dfrac{\pi}{n}.
$$

\begin{figure}[th]
\centerline{\psfig{file=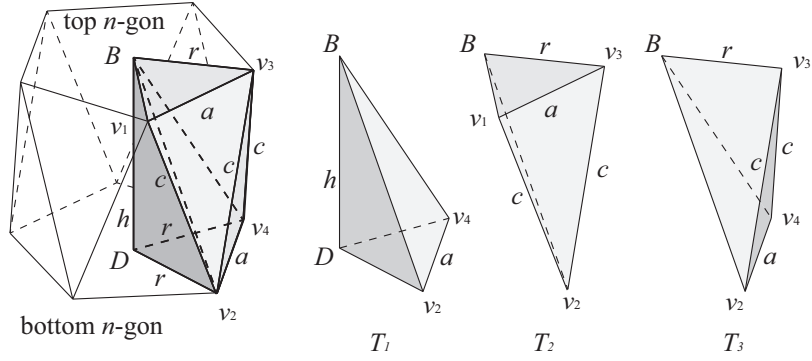,width=4.2in}}
\vspace*{8pt}
\caption{Decomposition of an antiprism $\mathcal{A}_n$.\label{fig4}}
\end{figure}

We calculate the volume of the tetrahedron $T_3=Bv_3v_2v_4$ by Servois' formula (see, e.g., \cite{Pon}, p.~98)
$$
vol(T_3)=\dfrac{r\,a\,h}{6}\sin\dfrac{\pi}{2}.
$$

Then we substitute $r,h$ using the expressions (\ref{rhExp}). Thus, the volume of Euclidean antiprism $\mathcal{A}_n(a,c)$ is
\begin{equation*}
vol(\mathcal{A}_n(a,c))=\dfrac{n\,a^2(2\cos\frac{\pi}{n}+1)}{24\sin\frac{\pi}{n}\cos\frac{\pi}{2n}}\sqrt{4\,c^2\cos^2\frac{\pi}{2n}-a^2}.
\end{equation*}

In particular, for the case $n=3$ we have
\begin{align*}
a^2&=3\,r^2,& \cos A&=\frac{-r}{\sqrt{4\,h^2+r^2}}=\frac{-a}{\sqrt{12\,c^2-3\,a^2}},\\
c^2&=h^2+r^2,& \cos C&=\frac{r^2-2\,h^2}{4\,h^2+r^2}=\frac{a^2-2\,c^2}{4\,c^2-a^2},
\end{align*}
and the volume is 
$$
vol(\mathcal{A}_3(a,c))=\dfrac{a^2}{3}\sqrt{3\,c^2-a^2}
$$
which is the same as in \cite{AbrKudMed2015} and coincide with the result of \cite{GMS}.

\section{Compact Hyperbolic Antiprism}
\subsection{Projective Caley--Klein model}
Consider the Minkowski space $\mathbb{R}^4_1$ with inner product $\langle X,Y\rangle=-\,x_1y_1-x_2y_2-x_3y_3+x_4y_4$. The {\em Cayley--Klein model} of the hyperbolic space $\mathbb{H}^3$ is the set of vectors that form the unit ball $K=\{(x_1,x_2,x_3,1)\;:\;x_1^2+x_2^2+x_3^2<1\}$ lying in the hyperplane $x_4=1$. Straight lines and planes in this model are given by the intersections of the ball $K$ with Euclidean lines and planes lying in the hyperplane $x_4=1$.

Let $V$ and $W$ be two vectors in $K$. Set $V=(v,1)$ and $W=(w,1)$, where $v,w\in\mathbb{R}^3$. Then the inner product of these vectors in the Minkowski space and the Euclidean inner product of $v$ and $w$ are related as $\langle V,W\rangle=1-\langle v, w\rangle_\mathbb{E}$.

The {\em distance} $\rho(V,W)$ between vectors $V$ and $W$ in the Cayle--Klein model is defined by the equality
\begin{equation}\label{distance}
\cosh\rho(V,W)=\dfrac{\langle V,W\rangle}{\sqrt{\langle V,V\rangle\langle W,W\rangle}}.
\end{equation}

A {\em plane} in the model $K$ can be defined as the locus $P=\{V\in K:\langle V,N\rangle=0\}$, where $N=(n,1), \langle n,n\rangle_\mathbb{E}>0$, is a {\em normal vector} to the plane $P$. The point $(n,1)$ is called a {\em pole} of $P$ and situated outside $K$.

In the Cayley--Klein model, consider an inner dihedral angle $\theta$ formed by two planes $P,Q$. We denote by $N,M$ the normal vectors to the planes $P,Q$ directed outwards of the dihedral angle $\theta$. Then
\begin{equation}\label{DihAngleFormulaHyp}
\cos\theta=-\dfrac{\langle N,M\rangle}{\sqrt{\langle N,N\rangle\langle M,M\rangle}}.
\end{equation}

Let $V_1=(v_1,1), V_2=(v_2,1), V_3=(v_3,1)$ be three noncoplanar vectors in $K$. Then there exists a unique plane $P=\{V\in K:\langle V,N\rangle=0\}$ passing through these vectors. The coordinates of the component $n$ of its normal vector $N=(n,1)$ are uniquely determined as a solution to the system of linear equations
\begin{equation}\begin{split}\label{nor}
\langle v_1,n\rangle_\mathbb{E}-1&=0\,,\\
\langle v_2,n\rangle_\mathbb{E}-1&=0\,,\\
\langle v_3,n\rangle_\mathbb{E}-1&=0\,.
\end{split}\end{equation}

\subsection{Edge lengths and existence condition}
Consider a compact hyperbolic antiprism $\mathcal{A}_n(a,c)$ given by its hyperbolic edge lengths $a,c$. We denote by $a$ the length of those edges that form top and bottom $n$-gonal faces. We set $c$ for the length of the lateral edges.

\begin{theorem}\label{existe}
A compact hyperbolic antiprism $\mathcal{A}_n(a,c)$ with the symmetry group $S_{2n}$ is exist if and only if 
\begin{equation*}
1+\cosh a-2\cosh c+2\,(1-\cosh c)\cos\frac{\pi}{n}<0.
\end{equation*}
\end{theorem}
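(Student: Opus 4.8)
The plan is to realize the antiprism explicitly in the Cayley--Klein model and reduce existence to the positivity of a single parameter. First I would note that the matrix $C_{2n\,h}$, acting on the first three coordinates and fixing $x_4$, is an orthogonal transformation preserving both the Minkowski form and the ball $K$; hence it is a hyperbolic isometry, and its orbit still produces a $2n$-vertex polyhedron with symmetry group $S_{2n}$. So I would take $V_1=(r,0,h/2,1)$ with $r,h>0$ and generate the remaining vertices as its $C_{2n\,h}$-orbit, exactly patterned on (\ref{EucCoor}). Two adjacent top vertices $V_1,V_3$ then bound an edge of length $a$, while a top vertex $V_1$ and an adjacent bottom vertex $V_2$ bound a lateral edge of length $c$.

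Next, using the distance formula (\ref{distance}) together with $\langle V,W\rangle=1-\langle v,w\rangle_{\mathbb E}$, and writing $u=r^2$, $w=h^2/4$ for brevity, I would obtain
$$\cosh a=\frac{1-u\cos\frac{2\pi}{n}-w}{1-u-w},\qquad \cosh c=\frac{1-u\cos\frac{\pi}{n}+w}{1-u-w}.$$
Clearing denominators turns this pair into a linear system in $u$ and $w$. A genuine compact antiprism exists precisely when $u>0$, $w>0$ (for $w=0$ it collapses to the planar regular $2n$-gon, just as in the Euclidean degenerate case), and all vertices lie inside the ball, i.e.\ $1-u-w>0$.

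I would then solve the system by Cramer's rule, writing $u$, $w$, and $1-u-w$ as ratios with a common determinant $\Delta$, and simplify using $\cos\frac{2\pi}{n}=2\cos^2\frac{\pi}{n}-1$. The three facts I expect are: $\Delta=\bigl(1+\cos\tfrac{\pi}{n}\bigr)\bigl[\,1+\cosh a+2\cosh c-2(1+\cosh c)\cos\tfrac{\pi}{n}\,\bigr]$, whose bracket exceeds $\cosh a-1>0$, so $\Delta>0$ always; the numerator of $u$ equals $2(\cosh a-1)>0$, so $u>0$ is automatic; and the numerator of $1-u-w$ collapses to $4\sin^2\frac{\pi}{n}>0$, so the ball condition holds automatically. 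Thus the only binding constraint is $w>0$, whose numerator factors as $-\bigl(1-\cos\tfrac{\pi}{n}\bigr)\bigl[\,1+\cosh a-2\cosh c+2(1-\cosh c)\cos\tfrac{\pi}{n}\,\bigr]$. Since $1-\cos\frac{\pi}{n}>0$ and $\Delta>0$, the inequality $w>0$ is equivalent to
$$1+\cosh a-2\cosh c+2\,(1-\cosh c)\cos\frac{\pi}{n}<0,$$
which is exactly the asserted condition.

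The main obstacle I anticipate is purely the algebraic bookkeeping of these three simplifications, and in particular verifying the cancellation giving $1-u-w=4\sin^2\frac{\pi}{n}/\Delta$ and the sign $\Delta>0$, so that $u>0$ and the ball condition can be discharged automatically and existence is governed by the single scalar inequality on $w$. Once these identities are confirmed, the equivalence with the stated condition is immediate.
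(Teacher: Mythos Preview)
Your proposal is correct and follows the same approach as the paper: realize the antiprism in the Cayley--Klein model with parameters $r,h$, invert the distance formulas to express $r^2,h^2$ in terms of $a,c$, and reduce existence to the single constraint $h^2>0$. You are in fact more explicit than the paper in verifying that $r^2>0$ and the ball condition $r^2+h^2/4<1$ hold automatically (the paper merely asserts the latter), and your $\cosh c$ formula correctly carries $\cos\tfrac{\pi}{n}$ where the paper's displayed equation has what appears to be a typo ($\cos\tfrac{2\pi}{n}$).
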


\begin{proof}
A mirror-rotational symmetry $C_{2n\,h}$ in $\mathbb{E}^3$ can be naturally extended to the symmetry of the same type in $\mathbb{H}^3$. Consider a projective Caley--Klein model of $\mathbb{H}^3$ with a rectangular coordinate system $Ox_1x_2x_3x_4$. As in Euclidean space, a mirror-rotational symmetry of order $2n$ is a rotation to the angle $\pi/n$ about the $Ox_3$ axis followed by a reflection with respect to the coordinate plane $Ox_1x_2$. It is defined by the matrix 
$$
\overline{C_{2n\,h}}=\left(
     \begin{array}{cccc}
       \cos \dfrac {\pi}{n} & -\sin \dfrac {\pi}{n} & ~~0 & ~~~~~0\\[7pt]
       \sin \dfrac {\pi}{n} & ~~\cos \dfrac {\pi}{n} & ~~0 & ~~~~~0\\[7pt]
       0 & 0 & -1 & ~~~~~0\\[7pt]
       0 & 0 & ~~0 & ~~~~~1\\
     \end{array}
   \right).
$$

We place an Euclidean antiprism into the ball $K=\{(x_1,x_2,x_3,1):x_1^2+x_2^2+x_3^2<1\}$. For this, we add the coordinate $x_4=1$ to each of the vertices $v_i$ in (\ref{EucCoor})
\begin{equation}\begin{split}\label{HypCoor}
V_{2k+1}&=\left(r\,\cos\frac{2k\pi}{n}, r\,\sin\frac{2k\pi}{n}, \frac{h}{2},1\right),\\
V_{2k+2}&=\left(r\,\cos\frac{(2k+1)\pi}{n}, r\,\sin\frac{(2k+1)\pi}{n}, -\frac{h}{2},1\right),
\end{split}\end{equation}
where $k=0,\ldots,n-1$. An additional condition 
\begin{equation}\label{inside}
r^2+\dfrac{h^2}{4}<1
\end{equation}
must be satisfied. It ensures that all vertices $V_i$ belong to the unit ball $K$.
The symmetry group acts transitively on the set of vertices
$$
\overline{C_{2n\,h}}\,:\,V_i \longmapsto V_{i+1},
$$
where the indices are taken modulo $2n$. By definition, given vertices $V_i$ form an antiprism in $\mathbb{H}^3$.

Knowing the coordinates of vertices, we calculate the edge lengths of a hyperbolic antiprism $\mathcal{A}_n$ by the formula (\ref{distance})
\begin{equation}\label{HypLeng}
\cosh a=\dfrac{4-h^2-4r^2\cos\frac{2\pi}{n}}{4-h^2-4r^2},\quad \cosh c=\dfrac{4+h^2-4r^2\cos\frac{2\pi}{n}}{4-h^2-4r^2}.
\end{equation}

The conditions \,$\cosh a,\,\cosh c \geq 1$\, follows immediately from (\ref{HypLeng}) and (\ref{inside}). Solving the system of equations (\ref{HypLeng}) with respect to $r^2$ and $h^2$, we obtain the following useful relations
\begin{equation}\begin{split}\label{rhExpHyp}
r^2&=\dfrac{\cosh a-1}{(1+\cosh a+2\cosh c-2(1+\cosh c)\cos\frac{\pi}{n})\cos^2\frac{\pi}{2n}},\\
h^2&=-\dfrac{4(1+\cosh a-2\cosh c-2(\cosh c-1)\cos\frac{\pi}{n})\tan^2\frac{\pi}{2n}}{1+\cosh a+2\cosh c-2(1+\cosh c)\cos\frac{\pi}{n}}.
\end{split}\end{equation}

Since \,$1+\cosh a+2\cosh c-2(1+\cosh c)\cos\frac{\pi}{n}>0$\, for any values of $a,c>0$, then the expression for $r^2$ is always positive. But from the expression for $h^2$ we conclude that the edge lengths $a,c$ can not be chosen arbitrarily. They should satisfy the condition
\begin{equation}\label{assump}
1+\cosh a-2\cosh c-2(\cosh c-1)\cos\frac{\pi}{n}<0.
\end{equation}

In the limit case of \,$1+\cosh a-2\cosh c-2(\cosh c-1)\cos\frac{\pi}{n}=0$, the hyperbolic antiprism $\mathcal{A}_n(a,c)$ degenerates into a plane $2n$-gon. This can be imagined by taking the parameter $h$ in Fig.~4 equal to zero.

We note that under the assumption (\ref{assump}), the inequality (\ref{inside}) follows from relations (\ref{rhExpHyp}). Thus, the inequality (\ref{assump}) is a necessary and sufficient condition for the existence of a compact antiprism $\mathcal{A}_n(a,c)$ in $\mathbb{H}^3$.
\end{proof}

For the case $n=3$, an antiprism $\mathcal{A}_3(a,c)$ is an octahedron $\mathcal{O}(a,c)$ with the symmetry group $S_6$. From Theorem~\ref{existe} we have the following.

\begin{corollary}
A compact hyperbolic octahedron $\mathcal{O}(a,c)$ with the symmetry group $S_6$ and edge lengths $a,c$ is exist if and only if 
\begin{equation*}
2+\cosh a-3\cosh c<0.
\end{equation*}
\end{corollary}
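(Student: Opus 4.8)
The plan is to obtain the Corollary as the special case $n=3$ of Theorem~\ref{existe}. As observed in the paragraph preceding the statement (and illustrated in Fig.~3), an octahedron $\mathcal{O}(a,c)$ carrying the symmetry group $S_6$ is exactly the antiprism $\mathcal{A}_3(a,c)$: its six vertices split into two triangular faces interchanged by the mirror-rotational symmetry $C_{6\,h}$. Hence the necessary and sufficient existence condition for $\mathcal{O}(a,c)$ is nothing but the condition of Theorem~\ref{existe} evaluated at $n=3$, and no new geometric construction is required.

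First I would record the value $\cos\frac{\pi}{3}=\frac12$ and substitute it into the inequality of Theorem~\ref{existe}. The only term that changes is $2\,(1-\cosh c)\cos\frac{\pi}{n}$, which at $n=3$ collapses to $2\,(1-\cosh c)\cdot\frac12=1-\cosh c$. Adding this to the remaining terms $1+\cosh a-2\cosh c$ and collecting like terms gives
\[
1+\cosh a-2\cosh c+(1-\cosh c)=2+\cosh a-3\cosh c,
\]
so the inequality of Theorem~\ref{existe} becomes precisely $2+\cosh a-3\cosh c<0$, which is the asserted criterion.

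There is essentially no analytic obstacle here, since the argument is a one-line specialization of an already proved theorem; the ``hard part'', such as it is, lies entirely in Theorem~\ref{existe} itself. The only point deserving a word of care is the identification $\mathcal{O}(a,c)=\mathcal{A}_3(a,c)$ at the level of combinatorial type and symmetry, which is what licenses the application of Theorem~\ref{existe}. Once this is granted, the accompanying features of the general case transfer verbatim to $n=3$: the limiting equality $2+\cosh a-3\cosh c=0$ corresponds to the degeneration into a planar hexagon, and the positivity of the denominator appearing in $(\ref{rhExpHyp})$ continues to hold, so that the condition is genuinely both necessary and sufficient.
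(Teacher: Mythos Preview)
Your proof is correct and is exactly the approach taken in the paper: the corollary is obtained by specializing Theorem~\ref{existe} to $n=3$, using $\cos\frac{\pi}{3}=\frac12$ to reduce the general inequality to $2+\cosh a-3\cosh c<0$.
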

This coincides with the result of \cite{AbrKudMed2015} (Remark~4.1).

\subsection{Dihedral angles}
\begin{theorem}\label{ThDihAngHyp}
Let $\mathcal{A}_n(a,c)$ be a compact hyperbolic antiprism with $2n$ vertices given by its edge lengths $a,c$. Then the dihedral angles of $\mathcal{A}_n(a,c)$ can be found by the formulas
\begin{equation}\begin{split}\label{HypDihCos}
\cos A&=\frac{-\sqrt{\cosh a-1}\left(1+\cosh a-2\cosh c\,\cos\frac{\pi}{n}\right)}{\sqrt{2(1+\cosh a-2\cosh^2 c)(\cos\frac{2\pi}{n}-\cosh a)}},\\
\cos C&=\frac{\cosh c-\cosh a\,\cosh c+2(\cosh^2 c-1)\cos\frac{\pi}{n}}{1+\cosh a-2\cosh^2 c}.
\end{split}\end{equation}
\end{theorem}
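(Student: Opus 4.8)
The plan is to work directly in the Cayley--Klein model, using the vertex coordinates (\ref{HypCoor}) together with the formulas (\ref{nor}) for a face pole and (\ref{DihAngleFormulaHyp}) for a dihedral angle. First I would identify the pair of faces meeting along each type of edge. The top $n$-gon lies in the hyperplane $x_3=h/2$, so its outward pole is immediately $N_{\mathrm{top}}=(0,0,2/h,1)$, requiring no linear system. An edge of length $a$, say $V_1V_3$, is shared by this top face and the lateral triangle $V_1V_2V_3$, so $A$ is the angle between $N_{\mathrm{top}}$ and the pole of $V_1V_2V_3$. An edge of length $c$, say $V_2V_3$, is shared by the two consecutive lateral triangles $V_1V_2V_3$ and $V_2V_3V_4$, so $C$ is the angle between their poles.

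Next I would compute the pole $N_1=(n^{(1)},1)$ of the triangle $V_1V_2V_3$ by solving the $3\times 3$ linear system (\ref{nor}) with the coordinates of $V_1,V_2,V_3$ from (\ref{HypCoor}); Cramer's rule expresses $n^{(1)}$ as rational functions of $r,h$ and of $\cos\frac{\pi}{n},\sin\frac{\pi}{n}$. Because the symmetry $\overline{C_{2n\,h}}$ sends $V_i\mapsto V_{i+1}$, it carries the triangle $V_1V_2V_3$ onto $V_2V_3V_4$; as it is a linear isometry fixing the $x_4$-coordinate, the pole of the second triangle is simply $N_2=\overline{C_{2n\,h}}\,N_1$. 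This avoids solving a second system, and since $\overline{C_{2n\,h}}$ preserves $\langle\cdot,\cdot\rangle$ it also gives at once $\langle N_2,N_2\rangle=\langle N_1,N_1\rangle$.

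For each face the origin satisfies $\langle 0,n\rangle_{\mathbb{E}}=0<1$, so the pole returned by (\ref{nor}) always points away from the interior; hence all three poles are the outward normals required in (\ref{DihAngleFormulaHyp}) and the formula yields the inner dihedral angles directly. I would then form
$$
\cos A=-\frac{\langle N_{\mathrm{top}},N_1\rangle}{\sqrt{\langle N_{\mathrm{top}},N_{\mathrm{top}}\rangle\,\langle N_1,N_1\rangle}},\qquad
\cos C=-\frac{\langle N_1,N_2\rangle}{\sqrt{\langle N_1,N_1\rangle\,\langle N_2,N_2\rangle}},
$$
computing each Minkowski inner product through $\langle(n,1),(m,1)\rangle=1-\langle n,m\rangle_{\mathbb{E}}$. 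At this stage both cosines are explicit, if unwieldy, expressions in $r^2,h^2$ and trigonometric functions of $\pi/n$.

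The final and most laborious step is to pass from these $(r,h)$-expressions to the stated edge-length formulas (\ref{HypDihCos}). Here I would substitute the relations (\ref{rhExpHyp}) for $r^2$ and $h^2$, clear the common denominator $1+\cosh a+2\cosh c-2(1+\cosh c)\cos\frac{\pi}{n}$, and reduce the half-angle factors $\cos^2\frac{\pi}{2n},\tan^2\frac{\pi}{2n}$ and the double-angle term $\cos\frac{2\pi}{n}=2\cos^2\frac{\pi}{n}-1$ by standard identities. The main obstacle is purely computational: verifying that the large rational expressions collapse to the compact forms in (\ref{HypDihCos}), with the radicand $2(1+\cosh a-2\cosh^2 c)(\cos\frac{2\pi}{n}-\cosh a)$ emerging with the correct positive sign under the existence condition (\ref{assump}). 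A symbolic-algebra verification confirms these simplifications, while tracking the outward orientation of the poles pins down the leading minus sign in $\cos A$.
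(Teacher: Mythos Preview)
Your proposal is correct and follows essentially the same route as the paper: compute the poles of the top face and of two adjacent lateral triangles in the Cayley--Klein model, apply (\ref{DihAngleFormulaHyp}), and then substitute (\ref{rhExpHyp}). Your two shortcuts---reading off $N_{\mathrm{top}}=(0,0,2/h,1)$ directly and transporting $N_1$ to $N_2$ via the isometry $\overline{C_{2n\,h}}$ rather than solving a second linear system---are valid economies but do not change the underlying argument.
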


\begin{proof}
We calculate the normal vectors to the faces $BV_1V_3, V_1V_2V_3$, and $V_2V_3V_4$ of the antiprism $\mathcal{A}_n(a,c)$ (we use the same order of vertices as in Fig.~4). To do this, we take the coordinates of vertices (\ref{HypCoor}) and solve the corresponding system of equations (\ref{nor}) for each of this faces. We get
\begin{equation*}\begin{split}
N_{BV_1V_3}&=\left(0,0,\frac{2}{h},1\right),\\
N_{V_1V_2V_3}&=\left(\frac{\cos\frac{\pi}{n}}{r\cos^2\frac{\pi}{2n}},\frac{2\tan\frac{\pi}{2n}}{r},\frac{2\tan^2\frac{\pi}{2n}}{h},1\right),\\
N_{V_2V_3V_4}&=\left(\frac{\cos\frac{2\pi}{n}}{r\cos^2\frac{\pi}{2n}},\frac{4\cos\frac{\pi}{n}\tan\frac{\pi}{2n}}{r},-\frac{2\tan^2\frac{\pi}{2n}}{h},1\right).
\end{split}\end{equation*}

One can ascertain that the obtained normal vectors are directed outwards of the antiprism. Then we use formula (\ref{DihAngleFormulaHyp}) to calculate the dihedral angles of $\mathcal{A}_n(a,c)$
\begin{equation*}\begin{split}
\cos A&=\frac{r\left(h^2-4\tan^2\frac{\pi}{2n}\right)}{\sqrt{(h^2-4)\left(h^2(r^2-1)-2h^2\tan^2\frac{\pi}{2n}-(h^2+4r^2)\tan^4\frac{\pi}{2n}\right)}},\\
\cos C&=\frac{4(h^2(r^2-2)-4r^2)\cos\frac{\pi}{n}+r^2(4+h^2)\left(3+\cos\frac{2\pi}{n}\right)}{8h^2-r^2\left(4(4+h^2)\cos\frac{\pi}{n}+(h^2-4)\left(3+\cos\frac{2\pi}{n}\right)\right)}.
\end{split}\end{equation*}

It follows from (\ref{inside}) that the expression under the square root is always positive.

Finally, we substitute parameters $r,h$ in the latter relations with the expressions (\ref{rhExpHyp}) to get the sought formulas (\ref{HypDihCos}).
\end{proof}

For $n=3$, an antiprism $\mathcal{A}_3(a,c)$ is an octahedron $\mathcal{O}(a,c)$ with the symmetry group $S_6$. From Theorem~\ref{ThDihAngHyp} we have the following.

\begin{corollary}
Let $\mathcal{O}(a,c)$ be a compact hyperbolic octahedron with the symmetry group $S_6$ and edge lengths $a,c$. Then the dihedral angles of $\mathcal{O}(a,c)$ can be found by the formulas 
\begin{equation*}\begin{split}
\cos A&=\frac{-\sqrt{\cosh a-1}(1+\cosh a-\cosh c)}{\sqrt{(2\cosh^2 c-\cosh a-1)(1+2\cosh a)}},\\
\cos C&=\frac{1-\cosh c+\cosh a\,\cosh c-\cosh^2 c}{2\cosh^2 c-\cosh a-1}.
\end{split}\end{equation*}
\end{corollary}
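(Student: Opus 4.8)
The plan is to specialize the general dihedral-angle formulas (\ref{HypDihCos}) of Theorem~\ref{ThDihAngHyp} to the value $n=3$, using the fact that the antiprism $\mathcal{A}_3(a,c)$ coincides with the octahedron $\mathcal{O}(a,c)$ possessing the symmetry group $S_6$. Since Theorem~\ref{ThDihAngHyp} already provides closed expressions for $\cos A$ and $\cos C$ valid for all $n$, the proof reduces to a substitution followed by elementary simplification; no new geometric construction is needed.

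First I would record the relevant trigonometric values, $\cos\frac{\pi}{3}=\frac{1}{2}$ and $\cos\frac{2\pi}{3}=-\frac{1}{2}$, and insert them into both lines of (\ref{HypDihCos}). In the formula for $\cos A$ the numerator factor $1+\cosh a-2\cosh c\cos\frac{\pi}{n}$ becomes $1+\cosh a-\cosh c$ directly, while inside the radical in the denominator I would rewrite $\cos\frac{2\pi}{n}-\cosh a$ as $-\tfrac{1}{2}(1+2\cosh a)$ and absorb the resulting factor $-\tfrac{1}{2}$ into the companion factor $1+\cosh a-2\cosh^2 c$. In the formula for $\cos C$, the substitution $\cos\frac{\pi}{n}=\tfrac{1}{2}$ simplifies the term $2(\cosh^2 c-1)\cos\frac{\pi}{n}$ to $\cosh^2 c-1$.

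The one step requiring attention is a piece of sign bookkeeping: after substitution, both the radicand of $\cos A$ and the fraction for $\cos C$ naturally carry the denominator in the form $1+\cosh a-2\cosh^2 c$, whereas the corollary writes it as $2\cosh^2 c-\cosh a-1$. I would reconcile the two by multiplying numerator and denominator (respectively the two bracketed factors under the root) by $-1$, verifying that the overall sign of each expression is preserved. Once this rearrangement is performed, the resulting formulas match those in the statement term by term, in agreement with the octahedron computation of \cite{AbrKudMed2015}.
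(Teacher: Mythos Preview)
Your proposal is correct and follows exactly the paper's approach: the corollary is obtained from Theorem~\ref{ThDihAngHyp} by setting $n=3$ and simplifying, and the paper says nothing more than this. Your explicit verification of the sign bookkeeping (flipping $1+\cosh a-2\cosh^2 c$ to $2\cosh^2 c-\cosh a-1$ in both formulas) is in fact more detailed than what the paper provides.
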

This coincides with the result of \cite{AbrKudMed2015} (Formulas~(4.13)).

\subsection{Volume formula}
\begin{theorem}\label{mainTheo}
Let $\mathcal{A}_n(a,c)$ be a compact hyperbolic antiprism with $2n$ vertices given by its edge lengths $a,c$. Then the volume $V=vol(\mathcal{A}_n(a,c))$ can be found by the formula
\begin{equation}\label{VolumeFormula}
V=n \int_{c_0}^c \frac{a\,G+t\,H}{(2\cosh^2 t-1-\cosh a)\sqrt{R}}\,dt,
\end{equation}
where
\begin{flalign*}
G&=2\left(\cosh t-\cos\frac{\pi}{n}\right)\sinh a\;\sinh t,\\
H&=-(\cosh a-1)\left(1+\cosh a+2\cosh^2 t-4\cosh t\cos\frac{\pi}{n}\right),\\
R&=1-\cosh a\,(2+\cosh a)+2\cosh^2 t+4\,(\cosh a-1)\cosh t\cos\frac{\pi}{n}-2\sinh^2 t\cos\frac{2\pi}{n}
\end{flalign*}
and $c_0$ is the root of the equation $2\cosh c\left(1+\cos\frac{\pi}{n}\right)=1+\cosh a+2\cos\frac{\pi}{n}$.
\end{theorem}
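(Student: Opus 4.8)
The natural tool here is the Schl\"afli differential formula, which for a smooth one-parameter family of polyhedra in $\mathbb{H}^3$ reads $dV=-\tfrac12\sum_e \ell_e\,d\theta_e$, the sum running over all edges $e$ with length $\ell_e$ and interior dihedral angle $\theta_e$. The plan is to turn this into a line integral and to recognize its integrand as the one in (\ref{VolumeFormula}). The first step is to exploit the $S_{2n}$ symmetry to collapse the sum: the antiprism $\mathcal{A}_n(a,c)$ has exactly $2n$ edges of length $a$ (forming the top and bottom $n$-gons), all carrying the same dihedral angle $A$, and $2n$ lateral edges of length $c$, all carrying the same dihedral angle $C$. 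Hence $dV=-n\,(a\,dA+c\,dC)$, where $A$ and $C$ are the functions of $(a,c)$ supplied by Theorem~\ref{ThDihAngHyp}.

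Next I would fix $a$ and integrate along the one-parameter path $c=t$. Along this path $da=0$, so $\frac{dV}{dt}=-n\bigl(a\,\frac{dA}{dt}+t\,\frac{dC}{dt}\bigr)$, with $A=A(t)$, $C=C(t)$ obtained by restricting (\ref{HypDihCos}) to the fixed value of $a$. The lower endpoint $c_0$ is precisely the value at which the existence inequality (\ref{assump}) degenerates to equality: there the height $h$ vanishes, the antiprism flattens to a planar $2n$-gon, and $V=0$. Rewriting the boundary case of (\ref{assump}) gives exactly $2\cosh c_0\,(1+\cos\frac{\pi}{n})=1+\cosh a+2\cos\frac{\pi}{n}$, which identifies $c_0$. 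Integrating from this base point then yields $V=-n\int_{c_0}^{c}\bigl(a\,A'(t)+t\,C'(t)\bigr)\,dt$.

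It remains to compute $A'(t)$ and $C'(t)$ and to match them against $G$, $H$, $R$. Writing $A'=-(\cos A)'/\sin A$ and $C'=-(\cos C)'/\sin C$, with $\sin A,\sin C>0$ since both angles lie in $(0,\pi)$, I would differentiate the explicit formulas (\ref{HypDihCos}) using $\frac{d}{dt}\cosh t=\sinh t$ and substitute $\sin A=\sqrt{1-\cos^2A}$, $\sin C=\sqrt{1-\cos^2C}$. The target identities are $A'(t)=-G\big/\bigl[(2\cosh^2 t-1-\cosh a)\sqrt{R}\bigr]$ and $C'(t)=-H\big/\bigl[(2\cosh^2 t-1-\cosh a)\sqrt{R}\bigr]$, from which $\frac{dV}{dt}=n\,(a\,G+t\,H)\big/\bigl[(2\cosh^2 t-1-\cosh a)\sqrt{R}\bigr]$, and (\ref{VolumeFormula}) follows by the fundamental theorem of calculus.

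The main obstacle is the algebraic simplification in this final step. Although $\cos A$ and $\cos C$ in (\ref{HypDihCos}) carry visibly different radicals in their denominators, after differentiation and the substitution of $\sin A$ and $\sin C$ both contributions must collapse onto the \emph{same} radical $\sqrt{R}$ and the same rational denominator $2\cosh^2 t-1-\cosh a$. Establishing this rests on a nontrivial polynomial identity showing that $1-\cos^2A$ and $1-\cos^2C$, once their own denominators are cleared, are each proportional to $R$ up to elementary factors that get absorbed into $G$, $H$ and the common denominator. Verifying this identity, and confirming the overall sign so that $V>0$ throughout the admissible range $c>c_0$, is the genuine computational content of the theorem; the remaining steps are bookkeeping.
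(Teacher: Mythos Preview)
Your proposal is correct and follows essentially the same route as the paper: apply Schl\"afli to collapse $dV$ to $-n(a\,dA+c\,dC)$ via the $S_{2n}$ symmetry, differentiate the explicit expressions for $\cos A$ and $\cos C$ from Theorem~\ref{ThDihAngHyp} with respect to $c$ to obtain $\partial V/\partial c$ in the claimed closed form, and then integrate in $c$ from the degeneration value $c_0$ (where the antiprism flattens and $V=0$) up to the given $c$. The paper, like you, relegates the identification of $\partial A/\partial c$ and $\partial C/\partial c$ with the $G$, $H$, $R$ expressions to a direct computation and does not spell out the algebra; your description of that step via $A'=-(\cos A)'/\sin A$ and the common radical $\sqrt{R}$ is exactly what is needed.
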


\begin{proof}
Consider a compact hyperbolic antiprism $\mathcal{A}_n(a,c)$ with $2n$ vertices given by its edge lengths $a,c$. By definition, $\mathcal{A}_n(a,c)$ has the symmetry group $S_{2n}$. According to Theorem~\ref{existe}, the domain of existence for such an antiprism has the form
$$
\Omega=\{(\cosh a,\cosh c)\,:\,\cosh a>1, \;2\cosh c\left(1+\cos\frac{\pi}{n}\right)>1+\cosh a+2\cos\frac{\pi}{n}\},
$$
in the coordinate system $(\cosh a,\cosh c)$ (see Fig.~5).
\begin{figure}[h]
\begin{center}
  \includegraphics[scale=0.4]{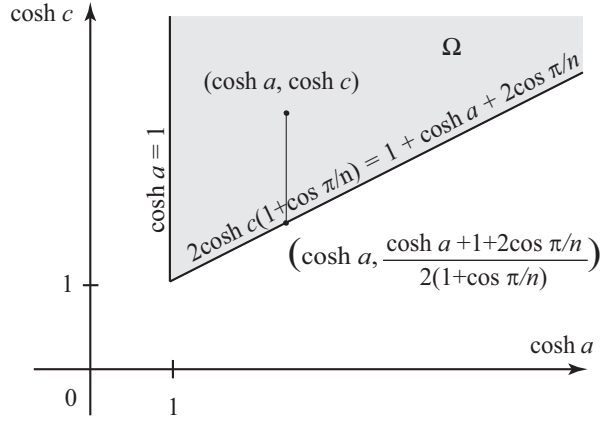}
\end{center}
\caption{Domain of existence for a hyperbolic antiprism $\mathcal{A}_n(a,c)$}
\end{figure}

The boundary of the domain $\Omega$ consists of two rays $\{\cosh a=1,\cosh c\geq 1\}$ and $\{\cosh a\geq 1, \;2\cosh c\left(1+\cos\frac{\pi}{n}\right)=1+\cosh a+2\cos\frac{\pi}{n}\}$. In each of them the antiprism $\mathcal{A}_n(a,c)$ loses dimension by degenerating into a line segment or plane $2n$-gon correspondingly. Hence, the equality $V=0$ holds on the boundary of $\Omega$.

Denote by $A,C$ the dihedral angles along the edges $a,c$ of $\mathcal{A}_n(a,c)$ respectively. According to Theorem~\ref{ThDihAngHyp}, the dihedral angles are uniquely determined by the edge lengths. We differentiate the volume as a composite function on the edge lengths
\begin{equation}\label{V1}
\frac{\partial V}{\partial c}=\frac{\partial V}{\partial A}\frac{\partial A}{\partial c}+\frac{\partial V}{\partial C}\frac{\partial C}{\partial c}.
\end{equation}

By the Schl\"afli formula (see, e.g., \cite{Vinberg}, Ch.~7, Sect.~2.2), we have
\begin{equation*}
d V=-\sum_{\theta}\frac{\ell_{\theta}}{2}\,d\theta=-n\,a\,dA\,-n\,c\,dC,
\end{equation*}
where the sum is taken over all edges of $\mathcal{A}_n(a,c), \;\ell_{\theta}$ denotes the edge length and $\theta$ is the interior dihedral angle along it. Consequently, 
\begin{equation}\label{V2}
\dfrac{\partial V}{\partial A}=-n\,a,\quad\dfrac{\partial V}{\partial C}=-n\,c.
\end{equation} 

From (\ref {HypDihCos}) by straightforward calculation we obtain
\begin{equation}\begin{split}\label{V3}
\frac{\partial A}{\partial c}&=\frac{2\left(\cosh c-\cos\frac{\pi}{n}\right)\sinh a\;\sinh c}{(1+\cosh a-2\cosh^2 c)\sqrt{R}},\\
\frac{\partial C}{\partial c}&=\frac{-(\cosh a-1)(1+\cosh a+2\cosh^2 c-4\cosh c\cos\frac{\pi}{n})}{(1+\cosh a-2\cosh^2 c)\sqrt{R}},
\end{split}\end{equation} 
where 
$$R=1-\cosh a\,(2+\cosh a)+2\cosh^2 c+4\,(\cosh a-1)\cosh c\cos\frac{\pi}{n}-2\sinh^2 c\,\cos\frac{2\pi}{n}.$$

We substitute (\ref{V2}) and (\ref{V3}) in (\ref{V1}). Then we have 
\begin{equation}\label{dVc}
\frac{\partial V}{\partial c}=n\frac{a\,G+c\,H}{(2\cosh^2 c-\cosh a-1)\sqrt{R}},
\end{equation}
where
\begin{flalign*}
G&=2\left(\cosh c-\cos\frac{\pi}{n}\right)\sinh a\;\sinh c,&\\
H&=-(\cosh a-1)\left(1+\cosh a+2\cosh^2 c-4\cosh c\cos\frac{\pi}{n}\right).&
\end{flalign*}

The integral of the differential form 
\begin{equation}\label{dV}
d V=\dfrac{\partial V}{\partial a}da+\dfrac{\partial V}{\partial c}dc
\end{equation} 
does not depend on the path of integration connecting two fixed points in $\Omega$. Since the volume vanishes in the boundary of $\Omega$ then by the Newton--Leibniz formula, the volume is equal to the integral of the form (\ref{dV}) along any piecewise smooth path $\gamma\subset\Omega$ beginning from any point at the boundary of $\Omega$ with the end at the point $(\cosh a,\cosh c)$. We substitute the expression (\ref{dVc}) in (\ref{dV}). Then we integrate the differential form (\ref{dV}) over the vertical segment shown in Fig.~5 which connects the boundary of $\Omega$ with the point $(\cosh a,\cosh c)$. Thus, we arrive at the formula (\ref{VolumeFormula}). To distinguish the edge length $c$ from the variable of integration we denote the variable by $t$.
\end{proof}

For $n=3$, an antiprism $\mathcal{A}_3(a,c)$ is an octahedron $\mathcal{O}(a,c)$ with the symmetry group $S_6$. From Theorem~\ref{mainTheo} we have the following.

\begin{corollary}
Let $\mathcal{O}(a,c)$ be a compact hyperbolic octahedron with the symmetry group $S_6$ given by its edge lengths $a,c$. Then the volume can be found by the formula
\begin{multline*}
vol(\mathcal{O}(a,c))=\\
3 \int_{c_0}^c \frac{a(2\cosh t-1)\sinh a\,\sinh t-t(\cosh a-1)(1+\cosh a+2\cosh t(\cosh t-1))}{(\cosh 2t-\cosh a)\sqrt{(3\cosh c-\cosh a-2)(\cosh a+\cosh c)}}\,dt,
\end{multline*}
where $c_0={\rm arccosh}\dfrac{\cosh a+2}{3}$.
\end{corollary}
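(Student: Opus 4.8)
The plan is to derive this corollary as a direct specialization of Theorem~\ref{mainTheo} to $n=3$, using that the antiprism $\mathcal{A}_3(a,c)$ is precisely the octahedron $\mathcal{O}(a,c)$ with symmetry group $S_6$. The whole argument reduces to substituting the two elementary values $\cos\frac{\pi}{3}=\frac12$ and $\cos\frac{2\pi}{3}=-\frac12$ into the quantities $G$, $H$, $R$, the prefactor $2\cosh^2 t-1-\cosh a$, and the equation defining $c_0$ that appear in Theorem~\ref{mainTheo}, and then simplifying. No new geometry is required.

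First I would treat the numerator $aG+tH$. Putting $\cos\frac{\pi}{n}=\frac12$ in $G$ turns the factor $2\bigl(\cosh t-\cos\frac{\pi}{n}\bigr)$ into $(2\cosh t-1)$, so that $aG=a(2\cosh t-1)\sinh a\,\sinh t$. In $H$ the term $-4\cosh t\cos\frac{\pi}{n}$ becomes $-2\cosh t$, and regrouping $2\cosh^2 t-2\cosh t=2\cosh t(\cosh t-1)$ gives $tH=-t(\cosh a-1)\bigl(1+\cosh a+2\cosh t(\cosh t-1)\bigr)$; together these reproduce the numerator in the statement. The prefactor simplifies at once through the identity $\cosh 2t=2\cosh^2 t-1$, so that $2\cosh^2 t-1-\cosh a=\cosh 2t-\cosh a$, matching the first factor of the denominator.

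The only step that calls for genuine algebra---the \emph{main obstacle}---is the radicand $R$. Substituting $\cos\frac{\pi}{n}=\frac12$, $\cos\frac{2\pi}{n}=-\frac12$ and $\sinh^2 t=\cosh^2 t-1$ collapses $R$ to the quadratic polynomial $3\cosh^2 t+2(\cosh a-1)\cosh t-\cosh^2 a-2\cosh a$ in $\cosh t$. I would then verify by direct expansion that this factors as $(3\cosh t-\cosh a-2)(\cosh t+\cosh a)$, which is what produces the product $\sqrt{(3\cosh t-\cosh a-2)(\cosh a+\cosh t)}$ under the integral sign (the radical being $R$ evaluated at the integration variable $t$). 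Finally, specializing the defining equation $2\cosh c\,(1+\cos\frac{\pi}{n})=1+\cosh a+2\cos\frac{\pi}{n}$ at $n=3$ gives $3\cosh c=\cosh a+2$, hence $c_0={\rm arccosh}\frac{\cosh a+2}{3}$. Assembling the simplified numerator, the factored radicand, the prefactor $\cosh 2t-\cosh a$, and carrying the overall constant $n=3$ through the integral yields exactly the claimed formula, in agreement with \cite{AbrKudMed2015}.
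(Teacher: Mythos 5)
Your proposal is correct and follows exactly the paper's own (implicit) argument: the corollary is nothing but Theorem~\ref{mainTheo} specialized to $n=3$ via $\cos\frac{\pi}{3}=\frac12$, $\cos\frac{2\pi}{3}=-\frac12$, and your factorization $R=(3\cosh t-\cosh a-2)(\cosh t+\cosh a)$ checks out by direct expansion. Note that your derivation also shows the radical must depend on the integration variable, i.e.\ it is $\sqrt{(3\cosh t-\cosh a-2)(\cosh a+\cosh t)}$, so the $\cosh c$ appearing under the square root in the printed statement is evidently a misprint for $\cosh t$, and your version is the correct one.
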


This completely coincides with the result of \cite{AbrKudMed2015} (Formula~(4.18)).

\appendix

\section*{Acknowledgments}
This work was supported by the Ministry of Education and Science of Russia (state assignment No. 1.12877.2018/12.1).

\end{document}